%

\documentclass{ws-ijwmip}
\usepackage{graphicx}
\usepackage[super]{cite}
\usepackage[utf8]{inputenc}

\usepackage{amsmath,amssymb,color}
\usepackage{enumerate}
\usepackage{caption,graphicx,epstopdf}
\usepackage[labelformat=simple]{subcaption}

\def\be{\begin{equation}}
\def\ee{\end{equation}}
\def\beaN{\setlength{\arraycolsep}{0.0em}\begin{eqnarray*}}
\def\eeaN{\end{eqnarray*}\setlength{\arraycolsep}{5pt}}
\def\bea{\setlength{\arraycolsep}{0.0em}\begin{eqnarray}}
\def\eea{\end{eqnarray}\setlength{\arraycolsep}{5pt}}
\def\dm{n}

\def\Rd{\RR^\dm}
\def\Zd{\ZZ^\dm}

\def\Td{\TT^\dm}
\def\CC{\mathbb{C}}
\def\NN{\mathbb{N}}
\def\RR{\mathbb{R}}
\def\ZZ{\mathbb{Z}}
\def\TT{\mathbb{T}}

\def\least{_\downarrow}
\def\up{_\uparrow}

\begin{document}

\markboth{Y. Hur}{Tight Wavelet Filter Banks with Prescribed Directions}


\title{Tight Wavelet Filter Banks with Prescribed Directions}

\author{YOUNGMI HUR}

\address{Department of Mathematics, Yonsei University\\
Seoul 03722, Korea.\footnote{Part of the work was performed while the author was visiting the Korea Institute for Advanced Study, Seoul, Korea.}\\
yhur@yonsei.ac.kr}

\maketitle

\begin{history}
\end{history}

\begin{abstract}
Constructing tight wavelet filter banks with prescribed directions is challenging.
This paper presents a systematic method for designing a tight wavelet filter bank, given any prescribed directions.
There are two types of wavelet filters in our tight wavelet filter bank. One type is entirely determined by the prescribed information about the directionality and makes the wavelet filter bank directional. The other type helps the wavelet filter bank to be tight.
In addition to the flexibility in choosing the directions, our construction method has other useful properties. 
It works for any multi-dimension, and it allows the user to have any prescribed number of vanishing moments along the chosen directions. 
Furthermore, our tight wavelet filter banks have fast algorithms for analysis and synthesis. 
Concrete examples are given to illustrate our construction method and properties of resulting tight wavelet filter banks.
\end{abstract}

\keywords{Directional wavelet filters; Filter banks; Tight wavelet filter banks; Tight wavelet frames; Trigonometric polynomial sum of hermitian squares; Wavelets; Wavelet filters}

\ccode{AMS Subject Classification: 42C40,42C15,65T60}

\section{Introduction}
Wavelets have been studied extensively both in theory and applications \cite{Mallat, Daub, Meyer}.
In particular, tight wavelet frames for \(L_2(\Rd)\) have been used as an excellent alternative to orthonormal wavelets for \(L_2(\Rd)\). They preserve many desired properties of orthonormal wavelets while providing much more flexibility in construction. This added flexibility is proven to be helpful, especially in image processing applications. To name only a few, see, for example, Ref.~\refcite{HanQuinc,ZSh} and references therein. It is well-known that there is a well-understood method to obtain tight wavelet frames for \(L_2(\Rd)\) from tight wavelet filter banks (cf. Section~\ref{subS:tight}). 

When constructing tight wavelet filter banks, one of the sought-for properties is the directionality of the wavelet filters. 
Many kinds of research have been performed in this direction, but many of them have drawbacks limiting their use in applications. For example, the directional filter banks designed in Ref.~\refcite{DoVetterli1,VLVetterliD} are for the 2-D case only and are not tight in general. Directional Haar tight wavelet filter banks are initially developed for the 2-D case to improve the reconstruction algorithm in parallel magnetic resonance imaging (pMRI) in Ref.~\refcite{LCSHT}. This construction method is later extended to all higher dimensions in Ref.~\refcite{HanLiZhuang}, but it does not allow the user to choose the directions before entering the construction. Similar types of directional tight wavelet filter banks under a more general setting, including the case of all lowpass filters with nonnegative coefficients, are constructed in Ref.~\refcite{DiaoHan}. Construction of various complex tight wavelet filter banks exhibiting directionality is also available in the literature~\cite{Han12,Han13,HanZhao,HanMZZ}. However, one still cannot choose the directions a priori in these constructions. As a result, constructing tight wavelet filter banks with prescribed directions remains challenging.

This paper presents a new method for designing a tight wavelet filter bank, given any prescribed directions. Our construction method is based on a recent tight wavelet frame construction approach~\cite{LaiStock,HL2} that uses the trigonometric polynomial sum of hermitian squares representation. In addition to allowing the user to choose the directions, our construction method has many valuable properties, including the following:
\begin{itemlist}
\item it works for any dimension \(\dm\ge 2\);
\item it allows the user to choose the number of vanishing moments along the chosen directions;
\item the resulting tight wavelet filter banks have fast algorithms for analysis (i.e., computing wavelet coefficients) and synthesis (i.e., getting back the input).
\end{itemlist}

Despite these useful properties, our proposed construction method has some limitations. First, our tight wavelet filter bank contains wavelet filters with no directionality at all (cf.~(\ref{eq:defqCl})). Furthermore, the directionality of the directional wavelet filter is somewhat limited as it has a non-directional factor as well (cf.~(\ref{eq:defqDl})). Overcoming these limitations is future work.

The outline of the paper is as follows. Brief reviews on tight wavelet filter banks, including their connection to tight wavelet frames and recent construction methods using a trigonometric polynomial sum of hermitian squares representation, are given in Section~\ref{S:Prelim}. Section~\ref{S:Main} presents our main results about the proposed tight wavelet filter banks, with our construction method in Section~\ref{subS:Theory} and the associated fast algorithms in Section~\ref{subS:Algorithms}. Section~\ref{S:Examples} offers some examples illustrating our method and properties of the resulting tight wavelet filter banks.

\section{Preliminaries}
\label{S:Prelim} 
\subsection{Tight wavelet filter banks}
\label{subS:tight}
A {\it filter} \(h:\Zd\to\RR\) is a function defined on \(\dm\)-dimensional integer grids. 
Although some of the results in this paper, including our main construction theorem (i.e., Theorem~\ref{thm:main}), still hold for general dilation matrices, their interpretation in connection with directional filters could be unclear. Hence, we consider in this paper an integer {\it dilation (factor)} \(\lambda\ge 2\) only.  We say a filter is a {\it lowpass} or {\it refinement} filter if \(\sum_{k\in\Zd}h(k)=\lambda^{\dm/2}\), and a {\it highpass} or {\it wavelet} filter if \(\sum_{k\in\Zd}h(k)=0\). 

A trigonometric polynomial \(\tau:\Td\to\CC\) is called a {\it mask} (associated with the filter \(h\)) if  
\be
\label{eq:deftauh}
\tau(\omega)=\sum_{k\in\Zd} h(k) e^{-ik\cdot\omega}.
\ee
In this paper, any property associated with the filter will be related to the mask, and vice versa, without explicit mention.
For example, a {\it lowpass} mask \(\tau\) is a mask with \(\tau(0)=\lambda^{\dm/2}\).

For a dilation factor \(\lambda\), let \(\Lambda\) be a complete set of representatives of the distinct cosets of the quotient
group \(\Zd/\lambda\Zd\) containing \(0\), and let \(\Gamma\) be a complete
set of representatives of the distinct cosets of
\(2\pi((\lambda^{-1}\Zd)/\Zd)\) containing \(0\). 

For a nonnegative integer \(m\), \(\tau\) has {\it accuracy number} \(m\) if the minimum order of the roots that \(\tau\) has at the point in \(\Gamma\setminus\{0\}\) is \(m\). The {\it flatness number} of \(\tau\) is the order of the root that \(\tau-\lambda^{\dm/2}\) has at \(0\). In particular, a lowpass mask \(\tau\) always has positive flatness, and it has positive accuracy if and only if \(\tau(\gamma)=0\), for every \(\gamma\in\Gamma\setminus\{0\}\). The order of the root that \(\tau\) has at \(0\) is called its {\it number of vanishing moments}. Hence any wavelet mask has at least one vanishing moment.

Let \(\tau\) be a lowpass mask with dilation \(\lambda\). We recall that \(\phi\in L_2(\Rd)\) is a {\it refinable function} (associated with \(\tau\)) if it satisfies \(\widehat{\phi}(\lambda\omega)=\lambda^{-\dm/2}\tau(\omega)\widehat{\phi}(\omega)\), \(\omega\in\Rd\). Here and below, \(\widehat{f}\) is the Fourier transform of \(f\in L_{2}(\Rd)\), defined as, for \(f\in L_{1}(\Rd)\cap L_{2}(\Rd)\),  \(\widehat{f}(\omega)=\int_{\Rd}f(x)e^{-ix\cdot\omega}\mathrm{d}x,\) \(\omega\in\Rd\).

Given the wavelet masks \(q_{i}\), \(1\leq i\leq r,\) \(\psi_{i}\in L_2(\Rd)\) is the {\it mother wavelet} if it satisfies \(\widehat{\psi_i}(\lambda\omega)=\lambda^{-\dm/2}q_{i}(\omega)\widehat{\phi}(\omega),\,\omega\in\Rd\), and 
\[
X(\{\psi_1,\ldots,\psi_r\}):=\{\lambda^{jn/2}\psi_i(\lambda^j\cdot\,-k):1\leq i\leq r; j\in\ZZ,k\in\Zd\}
\]
is the {\it wavelet system} generated by \(\psi_{i}\), \(1\leq i\leq r\).
The wavelet system is called an {\it (MRA-based) tight wavelet frame} if it forms a tight frame for \(L_2(\Rd)\).

Tight wavelet frames can be obtained by using a method called the {\it unitary extension principle} (UEP) \cite{HanOrtho,RonShen}. The following statement is taken from Ref.~\refcite{HanOrtho} but adapted to the setting of our paper.

\begin{theorem}[Ref.~\refcite{HanOrtho}]
\label{thm:uep}
Let \(\tau\) be a lowpass mask with dilation \(\lambda\), and let \(\phi\) be a compactly supported distribution defined by \(\widehat{\phi}(\omega):=\prod_{j=1}^{\infty}\lambda^{-\dm/2}\tau(\lambda^{-j}\omega)\). If \(q_{i},\,1\leq i\leq r\), are trigonometric polynomials such that for all \(\omega\in\Td\) and for all \(\gamma\in \Gamma\):
\begin{equation}
\label{eq:uepcond}
\tau(\omega)\overline{\tau(\omega+\gamma)}+\sum_{i=1}^{r}q_{i}(\omega)\overline{q_{i}(\omega+\gamma)}
=\begin{cases} \lambda^\dm,&\gamma=0\\0,&\gamma\ne 0,\end{cases}
\end{equation}
then \(\phi\) must be a function in \(L_2(\Rd)\) and the wavelet system \(X(\{\psi_1,\ldots,\psi_r\})\) generated by \(\psi_{i}\), \(1\leq i\leq r\), defined by \(\widehat{\psi_i}(\lambda\omega)=\lambda^{-\dm/2}q_{i}(\omega)\widehat{\phi}(\omega)\), \(\omega\in\Rd\), is a tight wavelet frame for \(L_{2}(\Rd)\).
 \end{theorem}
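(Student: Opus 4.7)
The plan is to prove the theorem in two stages: first show $\phi \in L_2(\Rd)$, and then establish the tight frame identity for the wavelet system $X(\{\psi_1,\ldots,\psi_r\})$. Setting $\gamma=0$ in (\ref{eq:uepcond}) gives the pointwise bound $|\tau(\omega)|^2 + \sum_{i=1}^r |q_i(\omega)|^2 = \lambda^\dm$, hence $|\lambda^{-\dm/2}\tau(\omega)| \le 1$, which makes the partial products defining $\widehat{\phi}$ uniformly bounded on $\Rd$. To upgrade boundedness to square-integrability, I would study the periodization $\Phi(\omega) := \sum_{k\in\Zd}|\widehat{\phi}(\omega+2\pi k)|^2$: the refinement identity $\widehat{\phi}(\lambda\omega) = \lambda^{-\dm/2}\tau(\omega)\widehat{\phi}(\omega)$, combined with (\ref{eq:uepcond}) at $\gamma=0$ and the coset decomposition of $\Zd$ modulo $\lambda\Zd$, yields $\Phi(\lambda\omega) \le \Phi(\omega)$ a.e.; iterating and passing to the limit shows $\Phi \le 1$ a.e., and integrating over $[0,2\pi]^\dm$ produces $\|\widehat{\phi}\|_2 < \infty$, so Plancherel gives $\phi \in L_2(\Rd)$.

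For the tight frame identity I would invoke the classical telescoping argument. Define the one-scale sums $A_j(f) := \sum_{k\in\Zd}|\inpro{f, \phi_{j,k}}|^2$ with $\phi_{j,k}(\cdot) := \lambda^{j\dm/2}\phi(\lambda^j\cdot - k)$, and similarly $B_j^{(i)}(f) := \sum_{k\in\Zd} |\inpro{f, \psi_{i,j,k}}|^2$. A direct Fourier-side computation expresses $A_j(f)$ and $B_j^{(i)}(f)$ as integrals involving $\widehat{f}$ and bracket products of $\widehat{\phi}$ against its $2\pi$-shifts. The refinement relations together with the UEP identity (\ref{eq:uepcond}), summed over $\gamma \in \Gamma$, then produce the fundamental telescoping identity $A_{j+1}(f) = A_j(f) + \sum_{i=1}^r B_j^{(i)}(f)$. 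Summing from $j=-J$ to $j=J-1$ reduces the claim $\sum_i \sum_{j,k}|\inpro{f, \psi_{i,j,k}}|^2 = \|f\|_2^2$ to showing $\lim_{J\to\infty}A_J(f) = \|f\|_2^2$ and $\lim_{J\to-\infty}A_J(f) = 0$.

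To finish, I would restrict $f$ to the dense subspace of $L_2(\Rd)$ whose Fourier transform is bounded and compactly supported away from the origin. For the first limit, the continuity of $\widehat{\phi}$ at $0$ together with $\widehat{\phi}(0) = 1$ (which follows from $\tau(0) = \lambda^{\dm/2}$ and the infinite product definition of $\widehat{\phi}$) enable a dominated convergence argument to conclude $A_J(f) \to \|f\|_2^2$. For the second limit, the support condition on $\widehat{f}$ pushes the relevant evaluations of $\widehat{\phi}(\lambda^{-J}\,\cdot)$ into a region where the periodization bound $\Phi \le 1$ forces $A_{-J}(f) \to 0$. The main obstacle I anticipate is the careful Fourier-side bookkeeping that produces the telescoping identity: the coset structure of $\Gamma$ and the off-diagonal cancellations in (\ref{eq:uepcond}) must be matched exactly against the shifted bracket products of $\widehat{\phi}$. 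Once that identity is in hand, the periodization bound and the two limits are essentially routine.
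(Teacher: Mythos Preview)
The paper does not prove this theorem at all: Theorem~\ref{thm:uep} is stated with attribution to Ref.~\refcite{HanOrtho} (``The following statement is taken from Ref.~\refcite{HanOrtho} but adapted to the setting of our paper''), and no proof is given. It is used only as a black box to justify that the filter banks constructed later yield tight wavelet frames.

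Your outline is the standard Ron--Shen/Han argument for the unitary extension principle and is essentially correct in spirit. One point deserves care: the step ``iterating $\Phi(\lambda\omega)\le\Phi(\omega)$ and passing to the limit shows $\Phi\le 1$'' presupposes that $\Phi$ is finite to begin with, which is exactly what you are trying to establish. The usual fix is to run the argument on the truncated products $\widehat{\phi}_J(\omega):=\prod_{j=1}^J \lambda^{-\dm/2}\tau(\lambda^{-j}\omega)\cdot\chi_{[-\pi,\pi]^\dm}(\lambda^{-J}\omega)$, for which the periodization is trivially bounded, obtain a uniform $L_2$ bound, and then pass to the limit via Fatou. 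Apart from that bootstrap, your telescoping and limit arguments are the classical ones and would go through.
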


We shall call the set \(\{\tau, q_1, \dots, q_r\}\) in the above theorem a {\it tight wavelet filter bank}. In particular, the masks \(q_{i},\,1\leq i\leq r\) in the tight wavelet filter bank \(\{\tau, q_1, \dots, q_r\}\) are wavelet masks.

\subsection{Tight wavelet filter banks from sos representations}
\label{subS:SOS}
The polynomial sum of squares representation has been extensively studied in the literature \cite{Art,Pfister} but recently has been connected with tight wavelet filter bank construction \cite{LaiStock,ScheidStock,ScheidStockII,HL1,HL2,HLO}. In these approaches, given a lowpass mask \(\tau\), one tries to find a sum of hermitian squares representation of a trigonometric polynomial derived from \(\tau\).

For a given nonnegative trigonometric polynomial \(f\),  \(f\) is said to have a {\it sum of hermitian squares (SOS) representation} if there exist trigonometric polynomials \(g_1,\ldots,g_N\) such that 
\[
f(\omega)=\sum_{l=1}^N|g_l(\omega)|^2,\quad \forall \omega\in\Td.
\]
A key observation in the sos-based tight wavelet filter bank construction is that, for a lowpass mask \(\tau\) with dilation \(\lambda\), finding an sos of the trigonometric polynomial
\[1-\lambda^{-\dm}\sum_{\gamma\in\Gamma} |\tau(\omega/\lambda+\gamma)|^2\]
is sufficient to obtain wavelet masks \(q_1, \dots, q_r\) such that \(\{\tau, q_1, \dots, q_r\}\) is a tight wavelet filter bank. 
The precise statement is given below, which is taken from Ref.~\refcite{LaiStock} but adapted to the notation of this paper. 

\begin{theorem}[Ref.~\refcite{LaiStock}]
\label{thm:sostwf}
Let \(\tau\) be a lowpass mask with dilation \(\lambda\), satisfying 
\[
1-\lambda^{-\dm}\sum_{\gamma\in\Gamma} |\tau(\omega/\lambda+\gamma)|^2=\sum_{l=1}^{N}|g_l(\omega)|^{2}, \quad \omega\in\Td
\]
for some trigonometric polynomials \(g_l\), \(1\le l \le N<+\infty\). 
Let \(h\) be the lowpass filter corresponding to \(\tau\), and for each \(\nu\in\Lambda\), define \(\tau_\nu\) by 
\be
\label{eq:deftaunu}
\tau_\nu(\omega) = \sum_{k\in\Zd} h(\lambda k-\nu) e^{-ik\cdot\omega},\quad\omega\in\Td.
\ee
Then, with the lowpass mask \(\tau\), the \(N+\lambda^\dm\) functions
\be
\label{eq:defq1}
q_{1,l}(\omega)=\tau(\omega){g}_{l}(\lambda\omega),\quad l=1,\ldots,N,\quad  \omega\in\Td
\ee
\be
\label{eq:defq2}
q_{2,\nu}(\omega)= e^{i\nu\cdot\omega}-\tau(\omega)\overline{\tau_{\nu}(\lambda\omega)}, \quad\nu\in\Lambda, \quad \omega\in\Td
\ee
form a tight wavelet filter bank.
\end{theorem}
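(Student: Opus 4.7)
The plan is to verify the UEP condition (\ref{eq:uepcond}) for the collection $\{\tau\}\cup\{q_{1,l}:1\le l\le N\}\cup\{q_{2,\nu}:\nu\in\Lambda\}$ and then invoke Theorem~\ref{thm:uep}; by the definition immediately following that theorem this is all that is needed. Fix $\omega\in\Td$ and $\gamma\in\Gamma$, and decompose the left-hand side of (\ref{eq:uepcond}) as $T_1+T_2+T_3$ with $T_1=\tau(\omega)\overline{\tau(\omega+\gamma)}$, $T_2=\sum_{l=1}^{N}q_{1,l}(\omega)\overline{q_{1,l}(\omega+\gamma)}$, and $T_3=\sum_{\nu\in\Lambda}q_{2,\nu}(\omega)\overline{q_{2,\nu}(\omega+\gamma)}$. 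The observation used throughout is that $\lambda\gamma\in 2\pi\Zd$ for every $\gamma\in\Gamma$, so the trigonometric polynomials $g_l(\lambda\cdot)$ and $\tau_\nu(\lambda\cdot)$ are invariant under the shift $\omega\mapsto\omega+\gamma$.

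For $T_2$, this periodicity pulls $\tau(\omega)\overline{\tau(\omega+\gamma)}$ out and leaves $\sum_l|g_l(\lambda\omega)|^{2}$. Substituting $\omega\mapsto\lambda\omega$ in the sos hypothesis yields $\sum_l|g_l(\lambda\omega)|^{2}=1-\lambda^{-\dm}\sum_{\gamma'\in\Gamma}|\tau(\omega+\gamma')|^{2}$, and hence $T_2=T_1-A$ where $A:=\lambda^{-\dm}\tau(\omega)\overline{\tau(\omega+\gamma)}\sum_{\gamma'\in\Gamma}|\tau(\omega+\gamma')|^{2}$.

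For $T_3$, I would expand the four cross terms of $(e^{i\nu\cdot\omega}-\tau(\omega)\overline{\tau_\nu(\lambda\omega)})\overline{(e^{i\nu\cdot(\omega+\gamma)}-\tau(\omega+\gamma)\overline{\tau_\nu(\lambda\omega)})}$ and sum over $\nu\in\Lambda$ using three standard polyphase identities: (i) $\sum_{\nu\in\Lambda}e^{-i\nu\cdot\gamma}=\lambda^{\dm}\delta_{\gamma,0}$, from orthogonality of characters on $\Zd/\lambda\Zd$; (ii) the polyphase reconstruction $\tau(\omega)=\sum_{\nu\in\Lambda}e^{i\nu\cdot\omega}\tau_\nu(\lambda\omega)$, obtained from (\ref{eq:deftaunu}) by grouping $\Zd$ into the cosets $-\nu+\lambda\Zd$; and (iii) $\sum_{\nu\in\Lambda}|\tau_\nu(\lambda\omega)|^{2}=\lambda^{-\dm}\sum_{\gamma'\in\Gamma}|\tau(\omega+\gamma')|^{2}$, which comes from expanding $|\tau(\omega+\gamma')|^{2}$ via (ii) and its conjugate, summing over $\gamma'$, and collapsing with (i). Identity (i) handles the constant term; (ii) applied at $\omega$ and the conjugate of (ii) applied at $\omega+\gamma$ each convert one of the middle cross terms into $-T_1$; and (iii) turns the remaining quadratic term into $A$. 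Thus $T_3=\lambda^{\dm}\delta_{\gamma,0}-2T_1+A$.

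Summing, $T_1+T_2+T_3=T_1+(T_1-A)+(\lambda^{\dm}\delta_{\gamma,0}-2T_1+A)=\lambda^{\dm}\delta_{\gamma,0}$, matching (\ref{eq:uepcond}); Theorem~\ref{thm:uep} then delivers the associated tight wavelet frame. The step demanding the most care is the reduction of $T_3$: I would first derive (ii) directly from (\ref{eq:deftaunu}) — identities (i) and (iii) then follow from (ii) together with orthogonality of characters on $\Zd/\lambda\Zd$ — and then match each of the four expanded terms to the correct identity while tracking signs, so that the $-A$ produced by $T_2$ is precisely cancelled by the $+A$ coming out of $T_3$.
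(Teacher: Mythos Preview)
Your argument is correct: the decomposition $T_1+T_2+T_3$, the three polyphase identities (i)--(iii), and the final cancellation all check out. In particular, the derivation of (ii) from (\ref{eq:deftaunu}) and of (iii) by expanding $|\tau(\omega+\gamma')|^2$ via (ii) and collapsing with the character orthogonality $\sum_{\gamma'\in\Gamma}e^{i(\nu-\nu')\cdot\gamma'}=\lambda^{\dm}\delta_{\nu,\nu'}$ are exactly what is needed, and your sign bookkeeping in $T_3$ is accurate.

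However, there is nothing in the present paper to compare against: Theorem~\ref{thm:sostwf} is quoted from Ref.~\refcite{LaiStock} as a preliminary result and is not proved here. The paper only \emph{uses} it (inside the proof of Theorem~\ref{thm:main}) by specializing $g_l$ and verifying the sos hypothesis for the particular $\tau$ built there. So your direct UEP verification stands on its own; it is a clean, self-contained route that bypasses the polyphase-matrix formalism typically used in the sos-based literature, at the modest cost of having to track the four cross terms in $T_3$ by hand.
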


{\noindent\bf Remark:}
\(\tau_\nu\) in (\ref{eq:deftaunu}) is the {\it polyphase component} of \(\tau\), associated with \(\nu\in\Lambda\)~\cite{LaiStock,HL1,HL2,HLO}. Since we believe providing this connection in detail may make the presentation of this paper confusing, we only mention it here without further explanations.
\qed

\medskip
When tight wavelet filter banks are constructed using the method in the above theorem, the number of vanishing moments of wavelet masks can be written in terms of the properties of the trigonometric polynomials \(\tau\) and \(g_l\), \(l=1,\ldots,N\).

\begin{theorem}[Ref.~\refcite{HL2}]
\label{thm:vmsostwf}
Assume the settings of Theorem~\ref{thm:sostwf}. Suppose that \(\tau\) has accuracy number \(a\ge 1\), and the flatness number \(b\). Then, for each \(1\leq l\leq N\), the wavelet mask \(q_{1,l}\) in (\ref{eq:defq1}) has exactly as many vanishing moments as \(g_{l}\), and the wavelet masks \(q_{2,\nu},\nu\in\Lambda\) in (\ref{eq:defq2}) have at least \(\min\{a,b\}\) vanishing moments.
\end{theorem}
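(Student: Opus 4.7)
The plan is to analyze the two families of wavelet masks separately by examining their Taylor expansions at the origin. For \(q_{1,l}\) in (\ref{eq:defq1}), the lowpass normalization gives \(\tau(0)=\lambda^{\dm/2}\ne 0\), so the order of zero of \(q_{1,l}(\omega)=\tau(\omega)g_l(\lambda\omega)\) at \(\omega=0\) equals the order of zero of \(g_l(\lambda\omega)\) there, which in turn matches the order of zero of \(g_l\) at \(0\). Hence \(q_{1,l}\) has exactly as many vanishing moments as \(g_l\). Evaluating the sos identity at \(\omega=0\) and using the accuracy hypothesis (\(\tau(\gamma)=0\) for \(\gamma\in\Gamma\setminus\{0\}\)) forces \(g_l(0)=0\), confirming this count is positive so that \(q_{1,l}\) is indeed a wavelet mask.

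For \(q_{2,\nu}\) in (\ref{eq:defq2}), the first step is to re-express \(\overline{\tau_\nu(\lambda\omega)}\) directly in terms of \(\tau\). Starting from the polyphase decomposition \(\tau(\omega)=\sum_{\nu'\in\Lambda}e^{i\nu'\cdot\omega}\tau_{\nu'}(\lambda\omega)\) and substituting \(\omega\mapsto\omega+\gamma\) for each \(\gamma\in\Gamma\) (noting that \(\tau_{\nu'}(\lambda(\omega+\gamma))=\tau_{\nu'}(\lambda\omega)\) since \(\lambda\gamma\in 2\pi\Zd\)), I invert the resulting linear system using the character orthogonality \(\sum_{\gamma\in\Gamma}e^{i(\nu'-\nu)\cdot\gamma}=\lambda^{\dm}\delta_{\nu,\nu'}\) to obtain
\[
e^{i\nu\cdot\omega}\tau_\nu(\lambda\omega)=\lambda^{-\dm}\sum_{\gamma\in\Gamma}e^{-i\nu\cdot\gamma}\tau(\omega+\gamma).
\]
Taking complex conjugates and multiplying by \(\tau(\omega)\) then recasts \(q_{2,\nu}\) as
\[
q_{2,\nu}(\omega)=e^{i\nu\cdot\omega}\Bigl(1-\lambda^{-\dm}\sum_{\gamma\in\Gamma}e^{i\nu\cdot\gamma}\tau(\omega)\overline{\tau(\omega+\gamma)}\Bigr),
\]
so the number of vanishing moments of \(q_{2,\nu}\) equals the order of zero of the bracket at \(\omega=0\).

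To close the argument, split the bracket into the \(\gamma=0\) contribution \(1-\lambda^{-\dm}|\tau(\omega)|^2\) and the terms indexed by \(\gamma\in\Gamma\setminus\{0\}\). Writing \(\tau(\omega)=\lambda^{\dm/2}+R(\omega)\) with \(R\) of order \(\ge b\) at \(0\) (flatness) gives \(|\tau(\omega)|^2-\lambda^{\dm}=\lambda^{\dm/2}(R+\overline R)+|R|^2\), which has a zero of order \(\ge b\). For \(\gamma\ne 0\), the accuracy assumption makes \(\overline{\tau(\omega+\gamma)}\) vanish to order \(\ge a\) at \(\omega=0\), so each remaining term contributes a zero of order \(\ge a\). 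Combining these bounds yields at least \(\min\{a,b\}\) vanishing moments for \(q_{2,\nu}\). The main obstacle is the polyphase inversion itself: pinning down the signs, complex conjugates, and the character orthogonality on \(\Zd/\lambda\Zd\) correctly so that the bracket appears in the clean split form above. Once the inversion is in hand, the order-of-zero estimates are a direct reading.
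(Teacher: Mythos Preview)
Your argument is correct. The polyphase inversion is carried out with the right signs and conjugates, the character orthogonality is applied correctly, and the split of the bracket into the \(\gamma=0\) piece (controlled by flatness \(b\)) and the \(\gamma\ne 0\) pieces (controlled by accuracy \(a\)) gives exactly the claimed bound \(\min\{a,b\}\). The treatment of \(q_{1,l}\) is also fine, and the check that \(g_l(0)=0\) via the sos identity at \(\omega=0\) is a nice touch.

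Note, however, that the paper does not actually prove this theorem: it is quoted as a preliminary result from Ref.~\refcite{HL2} and no proof is given here. So there is no ``paper's own proof'' to compare against in this document. Your write-up is a valid self-contained proof of the cited statement, and its ingredients (polyphase inversion via character sums, Taylor-order bookkeeping at \(0\)) are the natural ones; this is essentially how the result is established in the source reference as well.
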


We mention in passing a related result, which is about the minimum number of vanishing moments of all wavelet masks. This number is precisely \(\min\{a,c/2\}\), with \(a\) the same as above and \(c\) the order of the root that \(|\tau|^2-\lambda^{\dm}\) has at \(0\)~\cite{Han13}.

One of the main difficulties in the sos-based construction methods such as Theorem~\ref{thm:sostwf} is that, except in the 1-D case, finding an sos representation of a given nonnegative trigonometric polynomial is nontrivial. In the 1-D case, there is no such difficulty thanks to the following Fej\'{e}r-Riesz Lemma~\cite{Daub}. 

\begin{theorem}[Ref.~\refcite{Daub}]
\label{thm:FRlemma}
Let \(g\) be a nonnegative trigonometric polynomial such that \(g(\omega)=\sum_{k=-d}^{d}c_{k}e^{-ik\omega}\), \(\omega\in\TT\), with real coefficients \(c_k\), for some nonnegative integer \(d\). Then there exists a trigonometric polynomial \(g_{1/2}(\omega)=\sum_{k=0}^{d}a_{k}e^{-ik\omega}\), \(\omega\in\TT\), with real coefficients \(a_k\), such that \(|g_{1/2}(\omega)|^{2}=g(\omega)\), \(\omega\in\TT\).
\end{theorem}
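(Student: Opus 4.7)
The plan is to prove the Fej\'{e}r--Riesz Lemma by the classical spectral factorization argument: convert the trigonometric polynomial into an algebraic polynomial, analyze the symmetry of its roots, and then assemble a factor whose squared modulus reproduces $g$.

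First, I would introduce the Laurent polynomial $P(z)=\sum_{k=-d}^{d}c_{k}z^{-k}$, so that $P(e^{i\omega})=g(\omega)$, and then clear the denominator by setting $Q(z)=z^{d}P(z)=\sum_{k=-d}^{d}c_{k}z^{d-k}$, which is an ordinary polynomial of degree at most $2d$. Because the coefficients $c_{k}$ are real and $g$ is real-valued on $\TT$, the symmetry $c_{-k}=c_{k}$ forces the reciprocal identity $Q(z)=z^{2d}Q(1/z)$ (and $Q(\overline{z})=\overline{Q(z)}$ on account of the real coefficients). I would use this to conclude that the roots of $Q$ come in the three types: (i) real reciprocal pairs $\{r,1/r\}$ with $r\in\RR\setminus\{0,\pm 1\}$, (ii) complex quadruples $\{z_{0},\overline{z_{0}},1/z_{0},1/\overline{z_{0}}\}$ off the unit circle, and (iii) roots on the unit circle (including $\pm 1$).

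Next, I would use the nonnegativity $g\ge 0$ to show that every root of $Q$ on the unit circle has even multiplicity. Indeed, if $e^{i\omega_{0}}$ were a root of odd order, then $g$ would change sign in a neighborhood of $\omega_{0}$, contradicting $g\ge 0$. With this in hand, I would form the candidate factor by picking, from each reciprocal pair or quadruple, one representative root lying in the closed unit disk, and by taking exactly half of the multiplicity for each unit-circle root. Concretely, setting
\[
g_{1/2}(\omega) := A\,e^{-id'\omega}\prod_{j}\bigl(e^{-i\omega}-\alpha_{j}\bigr),
\]
where $\{\alpha_{j}\}$ is the chosen multiset of roots and $d'$ is the number of factors, a direct computation gives $|g_{1/2}(\omega)|^{2}=A^{2}\prod_{j}|e^{-i\omega}-\alpha_{j}|\cdot|e^{i\omega}-\overline{\alpha_{j}}|$, which matches $g(\omega)$ up to the leading constant $A$; I would fix $A\in\RR$ by matching one value (say, integrating both sides or equating leading coefficients).

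The main obstacle, and the step I would spend the most care on, is ensuring that $g_{1/2}$ has \emph{real} coefficients, as the theorem requires. This is where the conjugate symmetry of $Q$ enters crucially: I would insist that the chosen root representatives form a set closed under complex conjugation. For a nonreal root $z_{0}$ inside the disk, its conjugate $\overline{z_{0}}$ is also inside the disk and is a root of $Q$, so both are selected as a pair; real roots are self-conjugate; and unit-circle roots, being selected with half their (even) multiplicity, can also be grouped into conjugate pairs since $g(\omega)$ inherits the symmetry of the real-coefficient problem. With the multiset of chosen roots closed under conjugation, the polynomial $\prod_{j}(e^{-i\omega}-\alpha_{j})$ expands with real coefficients, and the degree count $d'\le d$ ensures the required form $g_{1/2}(\omega)=\sum_{k=0}^{d}a_{k}e^{-ik\omega}$ (padding with zero coefficients if $d'<d$). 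Combined with the earlier verification that $|g_{1/2}|^{2}=g$, this completes the proof.
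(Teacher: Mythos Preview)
The paper does not prove this statement at all: Theorem~\ref{thm:FRlemma} is quoted from Daubechies as a known result and is only invoked inside the proof of Theorem~\ref{thm:main} to produce the univariate square root $b_{m_l,1/2}$. There is therefore no ``paper's own proof'' to compare against.

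Your sketch is the classical spectral-factorization proof and is essentially correct. Two small points are worth tightening. First, the factor $e^{-id'\omega}$ in your definition of $g_{1/2}$ is spurious: the product $\prod_{j}(e^{-i\omega}-\alpha_{j})$ is already a polynomial in $e^{-i\omega}$ of degree $d'\le d$, so multiplying by $e^{-id'\omega}$ would push the support into $\{d',\ldots,2d'\}$ rather than $\{0,\ldots,d\}$; simply drop that factor. Second, in your verification of $|g_{1/2}|^{2}=g$ you wrote $|e^{-i\omega}-\alpha_{j}|\cdot|e^{i\omega}-\overline{\alpha_{j}}|$ where you meant the product $(e^{-i\omega}-\alpha_{j})(e^{i\omega}-\overline{\alpha_{j}})$; one then uses the reciprocal pairing of the roots of $Q$ to identify this product (up to a nonzero real scalar) with the contribution of the pair $\{\alpha_{j},1/\overline{\alpha_{j}}\}$ in the full factorization of $Q(e^{-i\omega})$. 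With those adjustments the argument goes through, and your handling of the real-coefficient issue via conjugate-closed root selection is exactly the right idea.
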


\section{Tight Wavelet Filter Banks with Prescribed Directions}
\label{S:Main}
\subsection{Construction of tight wavelet filter banks}
\label{subS:Theory}

Fix a positive integer \(N\in\NN\), the number of directions. Let \(\zeta_1, \ldots,\zeta_N\in \Zd\) be the initial points, 
and let \(\eta_1, \ldots,\eta_N\in \Zd\) be the terminal points of the \(N\) directions.
For each \(l=1,\dots,N\), choose a positive integer \(m_l\in\NN\), the number of vanishing moments along the \(\xi_l:=\eta_l-\zeta_l\) direction.
Then we will show that the directional wavelet mask of our tight wavelet filter bank has the factor (cf. Remark~2 after Theorem~\ref{thm:main} and (\ref{eq:defqDl}))
\[
(e^{-i\zeta_l\cdot\omega}-e^{-i\eta_l\cdot\omega})^{m_l}=e^{-im_l\zeta_l\cdot\omega}(1-e^{-i\xi_l\cdot\omega})^{m_l},
\]
up to scale. Before presenting the main result of this paper, we set some terminology. We refer to \([\xi_1, \dots,\xi_N]\in \ZZ^{\dm\times N}\) as the {\it direction matrix} and \((m_1, \dots, m_N)\) as the {\it vanishing moment vector}.

\begin{theorem}
\label{thm:main}
For \(N\ge 1\), let \([\xi_1, \dots,\xi_N]\) and \((m_1, \dots, m_N)\) be a direction matrix and a vanishing moment vector, respectively.
Let \(\lambda\ge 2\) be an integer such that \(N\le \lambda^\dm\), and let \(\Lambda=\{\nu_1,\dots,\nu_{\lambda^\dm}\}\) be a set of representatives of the distinct cosets of \(\Zd/\lambda\Zd\) containing \(0\).
Then there exist trigonometric polynomials \(p_l\), \(1\le l\le N\), such that the lowpass mask \(\tau\) defined as 
\be
\label{eq:deftau}
\tau(\omega)=\sum_{l=1}^{N}\lambda^{-\dm/2}p_{l}(\lambda\omega)e^{i\nu_l\cdot\omega}+ \sum_{l=N+1}^{\lambda^\dm}\lambda^{-\dm/2}e^{i\nu_l\cdot\omega},
\ee
and the wavelet masks \(q_{D,l}\), \(1\le l\le N\), and \(q_{C,\mu}\), \(1\le \mu\le \lambda^\dm\), defined as
\be
\label{eq:defqDl}
q_{D,l}(\omega)=\lambda^{-\dm/2}2^{-m_l}(1-e^{-i\lambda \xi_l\cdot\omega})^{m_l}\tau(\omega),
\ee
\be
\label{eq:defqCl}
q_{C,\mu}(\omega)=
\begin{cases} 
e^{i\nu_\mu\cdot\omega}-\lambda^{-\dm/2}\tau(\omega)\overline{p_\mu(\lambda\omega)}&1\le \mu\le N,\\
e^{i\nu_\mu\cdot\omega}-\lambda^{-\dm/2}\tau(\omega)&N+1\le \mu\le \lambda^\dm,
\end{cases}
\ee
form a tight wavelet filter bank. 
\end{theorem}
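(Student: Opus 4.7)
The plan is to verify that the hypotheses of Theorem~\ref{thm:sostwf} are satisfied by $\tau$ with the specific sos building blocks
\[
g_l(\omega) := \lambda^{-\dm/2}\, 2^{-m_l}(1-e^{-i\xi_l\cdot\omega})^{m_l}, \quad 1\le l\le N,
\]
so that the wavelet masks $q_{1,l}$ and $q_{2,\nu_\mu}$ delivered by that theorem collapse to $q_{D,l}$ and $q_{C,\mu}$ in (\ref{eq:defqDl})--(\ref{eq:defqCl}). Comparing (\ref{eq:deftau}) with the polyphase decomposition $\tau(\omega)=\sum_{\nu\in\Lambda}\tau_\nu(\lambda\omega)e^{i\nu\cdot\omega}$ built into the definition (\ref{eq:deftaunu}), I would first read off that $\tau_{\nu_l}(\omega)=\lambda^{-\dm/2}p_l(\omega)$ for $1\le l\le N$ and $\tau_{\nu_l}(\omega)=\lambda^{-\dm/2}$ for $N+1\le l\le \lambda^\dm$. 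With these polyphase components, the two cases of $q_{C,\mu}$ match $q_{2,\nu_\mu}$ on the nose, while $q_{D,l}$ matches $q_{1,l}$ because $(1-e^{-i\xi_l\cdot(\lambda\omega)})^{m_l}=(1-e^{-i\lambda\xi_l\cdot\omega})^{m_l}$. So the content of the theorem is reduced to choosing the $p_l$'s appropriately.

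\textbf{Reduction of the sos condition.} To meet the sos hypothesis of Theorem~\ref{thm:sostwf}, I would use the standard coset orthogonality $\sum_{\gamma\in\Gamma}e^{i(\nu-\nu')\cdot\gamma}=\lambda^\dm\delta_{\nu,\nu'}$ for $\nu,\nu'\in\Lambda$ (which follows from $\nu-\nu'\in\lambda\Zd$ iff $\nu=\nu'$) to derive the Parseval-type identity
\[
\lambda^{-\dm}\sum_{\gamma\in\Gamma}|\tau(\omega/\lambda+\gamma)|^2 = \sum_{\nu\in\Lambda}|\tau_\nu(\omega)|^2.
\]
Plugging in the polyphase values above, the left-hand side of the sos hypothesis becomes $\lambda^{-\dm}\sum_{l=1}^N(1-|p_l(\omega)|^2)$, while the elementary identity $|1-e^{-it}|^{2m}=4^m\sin^{2m}(t/2)$ turns the right-hand side $\sum_{l=1}^N|g_l(\omega)|^2$ into $\lambda^{-\dm}\sum_{l=1}^N\sin^{2m_l}(\xi_l\cdot\omega/2)$. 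It thus suffices to impose, for each $l$, the termwise identity
\[
|p_l(\omega)|^2 = 1-\sin^{2m_l}(\xi_l\cdot\omega/2).
\]

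\textbf{Construction of the $p_l$'s and main obstacle.} For each $l$, the univariate function $G_l(t):=1-\sin^{2m_l}(t/2)$ is a nonnegative trigonometric polynomial with real coefficients, being a polynomial in $\cos t$. The Fej\'er-Riesz Lemma (Theorem~\ref{thm:FRlemma}) then produces a univariate trigonometric polynomial $\tilde p_l$ with real coefficients and $|\tilde p_l(t)|^2=G_l(t)$; I would set $p_l(\omega):=\tilde p_l(\xi_l\cdot\omega)$, which is a bona fide trigonometric polynomial on $\Td$ because $\xi_l\in\Zd$. Since $G_l(0)=1$, one may flip the sign of $\tilde p_l$ if necessary to arrange $p_l(0)=1$ for every $l$, which gives $\tau(0)=\lambda^{\dm/2}$ (this is where $N\le\lambda^\dm$ enters, so that the count of terms in (\ref{eq:deftau}) comes out right), so $\tau$ is a genuine lowpass mask. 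Theorem~\ref{thm:sostwf} then delivers the claimed tight wavelet filter bank. The only analytically substantive step is the Fej\'er-Riesz factorization; the whole construction neatly sidesteps the genuinely multivariate sos problem flagged after Theorem~\ref{thm:sostwf} precisely because the separable form $G_l(\xi_l\cdot\omega)$ lets one factor in a single scalar variable.
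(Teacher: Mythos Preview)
Your proposal is correct and follows essentially the same approach as the paper's proof: define $g_l$ as you do, use Fej\'er--Riesz on the univariate polynomial $1-\sin^{2m_l}(t/2)$ to produce $p_l(\omega)=\tilde p_l(\xi_l\cdot\omega)$ with $p_l(0)=1$, identify the polyphase components of $\tau$, verify the sos identity $1-\lambda^{-\dm}\sum_\gamma|\tau(\omega/\lambda+\gamma)|^2=\sum_l|g_l(\omega)|^2$, and invoke Theorem~\ref{thm:sostwf}. The only cosmetic difference is that you cite the polyphase Parseval identity $\lambda^{-\dm}\sum_\gamma|\tau(\omega/\lambda+\gamma)|^2=\sum_\nu|\tau_\nu(\omega)|^2$ directly, whereas the paper expands the sum over $\Gamma$ by hand and applies the coset orthogonality relation term by term; the computations are the same.
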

{\noindent\bf Remark:} If \(N=\lambda^\dm\), there is no index \(l\) such that \(N+1\le l \le \lambda^\dm\) in the definition of \(\tau\) in (\ref{eq:deftau}), hence in this case, \(\tau\) is defined as \(\tau(\omega)=\sum_{l=1}^{N}\lambda^{-\dm/2}p_{l}(\lambda\omega)e^{i\nu_l\cdot\omega}\).
\quad \qed

\begin{proof}
For \(1\le l\le N\), let 
\be
\label{eq:defgl}
g_l(\omega)=\lambda^{-\dm/2}2^{-m_l}(1-e^{-i\xi_l\cdot\omega})^{m_l}.
\ee 
Then, since \(|g_l(\omega)|^2=\lambda^{-\dm}\sin^{2m_l}\left(\xi_l\cdot\omega/2\right)\), we have
\[1-\sum_{l=1}^{N}|g_l(\omega)|^2=\sum_{l=1}^{N}\frac{1}{\lambda^{\dm}}\left(1-\sin^{2m_l}\left(\frac{\xi_l\cdot\omega}{2}\right)\right)+\sum_{l=N+1}^{\lambda^\dm}\frac{1}{\lambda^{\dm}},\quad \forall \omega\in\Td.\]
Since  \(1-\sin^{2m_l}(t)\ge 0\) for all \(t\in\TT\), by Theorem~\ref{thm:FRlemma}, there exists a univariate trigonometric polynomial \(b_{m_l,1/2}\) such that \(1-\sin^{2m_l}(t)=|b_{m_l,1/2}(t)|^2\), \(t\in\TT\).
Note that we can choose \(b_{m_l,1/2}\) to satisfy \(b_{m_l,1/2}(0)=1\).
Let \(p_l(\omega):=b_{m_l,1/2}(\xi_l\cdot\omega)\), \(\omega\in\Td\). Then \(p_l(0)=1\) and
\be
\label{eq:oneminussumgl}
1-\sum_{l=1}^{N}|g_l(\omega)|^2=\sum_{l=1}^{N}\left|\frac{1}{\lambda^{\dm/2}}p_l(\omega)\right|^2+\sum_{l=N+1}^{\lambda^\dm}\left|\frac{1}{\lambda^{\dm/2}}\right|^2.
\ee
Define \(\tau\) as in (\ref{eq:deftau}). 
Then \(\tau\) is a lowpass mask since \(\tau(0)=\lambda^{\dm/2}\), and we have
\beaN
\lambda^{\dm}\sum_{\gamma\in\Gamma} \left|\tau\left(\frac{\omega}{\lambda}+\gamma\right)\right|^2
&=&\sum_{\gamma\in\Gamma}\left(\sum_{l=1}^{N}p_{l}\left(\lambda\left(\frac{\omega}{\lambda}+\gamma\right)\right)e^{i\nu_l\cdot\left(\frac{\omega}{\lambda}+\gamma\right)}+ \sum_{l=N+1}^{\lambda^\dm}e^{i\nu_l\cdot\left(\frac{\omega}{\lambda}+\gamma\right)}\right)\\
&\cdot&\left(\sum_{l'=1}^{N}\overline{p_{l'}\left(\lambda\left(\frac{\omega}{\lambda}+\gamma\right)\right)}e^{-i\nu_{l'}\cdot\left(\frac{\omega}{\lambda}+\gamma\right)}+ \sum_{l'=N+1}^{\lambda^\dm}e^{-i\nu_{l'}\cdot\left(\frac{\omega}{\lambda}+\gamma\right)}\right).
\eeaN
Since \(p_l\) is a trigonometric polynomial and \(\lambda\gamma\in 2\pi\Zd\), the above expression becomes
\beaN
&&\sum_{l=1}^{N}\;\sum_{l'=1}^{N}\left(\sum_{\gamma\in\Gamma}e^{i(\nu_l-\nu_{l'})\cdot\gamma}\right)p_{l}(\omega)\overline{p_{l'}(\omega)}e^{i(\nu_l-\nu_{l'})\cdot\omega/\lambda}\\
&+&\sum_{l=N+1}^{\lambda^\dm}\;\sum_{l'=1}^{N}\left(\sum_{\gamma\in\Gamma}e^{i(\nu_l-\nu_{l'})\cdot\gamma}\right)\overline{p_{l'}(\omega)}e^{i(\nu_l-\nu_{l'})\cdot\omega/\lambda}\\
&+&\sum_{l=1}^{N}\;\sum_{l'=N+1}^{\lambda^\dm}\left(\sum_{\gamma\in\Gamma}e^{i(\nu_l-\nu_{l'})\cdot\gamma}\right)p_{l}(\omega)e^{i(\nu_l-\nu_{l'})\cdot\omega/\lambda}\\
&+&\sum_{l=N+1}^{\lambda^\dm}\;\sum_{l'=N+1}^{\lambda^\dm}\left(\sum_{\gamma\in\Gamma}e^{i(\nu_l-\nu_{l'})\cdot\gamma}\right)e^{i(\nu_l-\nu_{l'})\cdot\omega/\lambda}=\sum_{l=1}^{N}|p_l(\omega)|^2+\sum_{l=N+1}^{\lambda^\dm}1,
\eeaN
where, for the equality, the following known identity (cf. Ref.~\refcite{ECLP})
\[
\sum_{\gamma\in\Gamma}e^{i(\nu_l-\nu_{l'})\cdot\gamma}=
\begin{cases} 
\lambda^\dm&\nu_l=\nu_{l'},\\
0&\nu_l\ne\nu_{l'},
\end{cases}
\]
is used. Thus, from (\ref{eq:oneminussumgl}), we get
\[
1-\lambda^{-\dm}\sum_{\gamma\in\Gamma} |\tau(\omega/\lambda+\gamma)|^2=\sum_{l=1}^N|g_l(\omega)|^2.
\]
If \(h\) is the filter corresponding to the lowpass mask \(\tau\), then 
\[
\tau(\omega)=\sum_{k\in\Zd}\sum_{l=1}^{\lambda^\dm}h(\lambda k-\nu_l) e^{-i(\lambda k-\nu_l)\cdot\omega}=\sum_{l=1}^{\lambda^\dm}\left(\sum_{k\in\Zd}h(\lambda k-\nu_l) e^{-i(k\cdot(\lambda\omega))}\right)e^{i\nu_l\cdot\omega}.\]
By comparing this last expression with (\ref{eq:deftau}), we see that
\[
\sum_{k\in\Zd} h(\lambda k-{\nu_l}) e^{-ik\cdot\omega}=\begin{cases} 
\lambda^{-\dm/2}p_l(\omega)&1\le l\le N\\
\lambda^{-\dm/2}& N+1\le l\le \lambda^\dm.
\end{cases}
\]
Define \(q_{D,l}\), \(1\le l\le N\) and \(q_{C,\mu}\), \(1\le \mu\le \lambda^\dm\) as in (\ref{eq:defqDl}) and (\ref{eq:defqCl}), respectively.
Then, by Theorem~\ref{thm:sostwf}, these \(N+\lambda^\dm\) functions with the lowpass mask \(\tau\) form a tight wavelet filter bank.
\end{proof}

{\noindent\bf Remark 1:}
We refer to the tight wavelet filter bank constructed in Theorem~\ref{thm:main} as the {\it tight wavelet filter bank with prescribed directions (TWFPD)}.
We refer to \(q_{D,l}\), \(1\le l\le N\) as the {\it directional wavelet masks}, and \(q_{C,\mu}\), \(1\le \mu\le \lambda^\dm\) as the {\it complementary wavelet masks} of TWFPD.
\qed

\medskip

{\noindent\bf Remark 2:}
In Theorem~\ref{thm:main} and the TWFPD,  with the initial point \(\zeta_l\) of direction \(\xi_l\), one can replace (\ref{eq:defqDl}) and (\ref{eq:defgl}) by the directional wavelet mask \(q_{D,l}\) of the form
\[
q_{D,l}(\omega)=\tau(\omega)g_l(\lambda \omega)=\lambda^{-\dm/2}2^{-m_l}e^{-i\lambda m_l\zeta_l\cdot\omega}(1-e^{-i\lambda \xi_l\cdot\omega})^{m_l}\tau(\omega)
\]
and the function \(g_l\) of the form
\be
\label{eq:defglg}
g_l(\omega)=\lambda^{-\dm/2}2^{-m_l}e^{-im_l\zeta_l\cdot\omega}(1-e^{-i\xi_l\cdot\omega})^{m_l},
\ee 
respectively. Since the two different forms of the directional wavelet mask \(q_{D,l}\) differ only by the shift of the corresponding mother wavelet, the constructed tight wavelet frames are the same. 
\qed

%
%

\medskip

Since any MRA-based wavelet system has fast algorithms, our tight wavelet frame associated with TWFPD has fast algorithms. Because we have a tight wavelet frame, the same filters are used both for the analysis algorithm and the synthesis algorithm. In addition to these standard fast algorithms, because of the specific way that our TWFPD is constructed, an alternative synthesis algorithm is available for our TWFPD. More details about these algorithms are given in Section~\ref{subS:Algorithms}.

The TWFPD construction in Theorem~\ref{thm:main} is a particular case of the sos-based construction of tight wavelet filter banks given in Theorem~\ref{thm:sostwf}. The following corollary is an immediate consequence of Theorem~\ref{thm:vmsostwf}  when applied to our TWFPD wavelet masks. 

\begin{corollary}
\label{coro:VM}
Let \(\{\tau,q_{D,1},\ldots,q_{D,N}, q_{C,1},\ldots, q_{C,\lambda^\dm}\}\) be the TWFPD constructed in Theorem~\ref{thm:main} with the vanishing moment vector \((m_1, \dots, m_N)\).
Then for each \(l=1,\dots, N\), the directional wavelet mask \(q_{D,l}\) has exactly \(m_l\) vanishing moments, and for each \(\mu=1,\dots, \lambda^\dm\), the complementary wavelet mask \(q_{C,\mu}\) has at least \(\min\{a,b\}\ge 1\) vanishing moments, where \(a\) is the accuracy number and \(b\) is the flatness number of \(\tau\). 
\end{corollary}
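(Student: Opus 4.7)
The plan is to recognize that the TWFPD of Theorem~\ref{thm:main} is literally an instance of the sos-based construction of Theorem~\ref{thm:sostwf}, with the trigonometric polynomials $g_l$ of~(\ref{eq:defgl}), and then invoke Theorem~\ref{thm:vmsostwf}. From~(\ref{eq:defqDl}) and~(\ref{eq:defgl}) one has $q_{D,l}(\omega)=\tau(\omega)g_l(\lambda\omega)$, matching the form of $q_{1,l}$ in~(\ref{eq:defq1}); and the computation in the proof of Theorem~\ref{thm:main} identifies the polyphase components of $\tau$ as $\tau_{\nu_\mu}=\lambda^{-\dm/2}p_\mu$ for $1\le\mu\le N$ and $\tau_{\nu_\mu}=\lambda^{-\dm/2}$ for $N+1\le\mu\le\lambda^\dm$, so~(\ref{eq:defqCl}) matches~(\ref{eq:defq2}). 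With this identification the corollary is merely an unpacking of Theorem~\ref{thm:vmsostwf}.

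For the directional wavelet masks I would invoke Theorem~\ref{thm:vmsostwf} directly: each $q_{D,l}$ inherits exactly the number of vanishing moments of $g_l$. Since $1-e^{-i\xi_l\cdot\omega}=i\xi_l\cdot\omega+O(|\omega|^2)$ near the origin and $\xi_l\ne 0$ (implicit in calling it a direction), $g_l$ has a zero of order exactly $m_l$ at $0$, giving the first claim.

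For the complementary wavelet masks the same theorem gives the lower bound $\min\{a,b\}$, so it remains to verify the hypothesis $a\ge 1$ (and to note $b\ge 1$, which is automatic since $\tau$ is a lowpass mask). The check on $a$ is a short character computation: because each $p_l$ is a trigonometric polynomial on $\Td$ and $\lambda\gamma\in 2\pi\Zd$ for every $\gamma\in\Gamma$, one has $p_l(\lambda\gamma)=p_l(0)=1$, so
\[
\tau(\gamma)=\lambda^{-\dm/2}\sum_{\nu\in\Lambda}e^{i\nu\cdot\gamma},
\]
and this sum vanishes for $\gamma\in\Gamma\setminus\{0\}$ by the standard character identity dual to the one used in the proof of Theorem~\ref{thm:main}. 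Hence $a\ge 1$, and the bound $\min\{a,b\}\ge 1$ follows. The main obstacle, such as it is, is precisely this verification of $a\ge 1$; everything else is bookkeeping that aligns the notation of Theorems~\ref{thm:sostwf}--\ref{thm:vmsostwf} with the TWFPD construction.
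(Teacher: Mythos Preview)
Your proposal is correct and follows essentially the same approach as the paper: identify the TWFPD as an instance of Theorem~\ref{thm:sostwf}, determine the vanishing moments of $g_l$ directly (the paper uses the formula $|g_l(\omega)|^2=\lambda^{-\dm}\sin^{2m_l}(\xi_l\cdot\omega/2)$ rather than your Taylor expansion, but the conclusion is the same), verify $a\ge 1$ via the character sum $\sum_{\nu\in\Lambda}e^{i\nu\cdot\gamma}=0$ for $\gamma\in\Gamma\setminus\{0\}$, and invoke Theorem~\ref{thm:vmsostwf}.
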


\begin{proof}
We recall \(|g_l(\omega)|^2=\lambda^{-\dm}\sin^{2m_l}\left(\xi_l\cdot\omega/2\right),\)
from the proof of Theorem~\ref{thm:main}, where \(g_l\) is defined as in (\ref{eq:defgl}). Hence \(g_l\) has exactly \(m_l\) vanishing moments, which implies that the directional wavelet mask \(q_{D,l}\) has exactly \(m_l\) vanishing moments, since \(q_{D,l}(\omega)=\tau(\omega)g_{l}(\lambda\omega)\), \(\omega\in\Td\).

To estimate the number of vanishing moments for complementary wavelet masks, we first note that \(b\ge 1\) since \(\tau\) of TWFPD is a lowpass mask. Also, \(\tau\) has positive accuracy (i.e. \(a\ge 1\)) since, for any \(\gamma\in\Gamma\setminus\{0\}\), 
\[\tau(\gamma)=\sum_{l=1}^{N}\lambda^{-\dm/2}p_{l}(\lambda\gamma)e^{i\nu_l\cdot\gamma}+ \sum_{l=N+1}^{\lambda^\dm}\lambda^{-\dm/2}e^{i\nu_l\cdot\gamma}=\lambda^{-\dm/2}\sum_{l=1}^{\lambda^\dm}e^{i\nu_l\cdot\gamma}=0.\]
where, the facts that \(\lambda\gamma\in 2\pi\Zd\), \(p_l\) is a trigonometric polynomial, and \(p_l(0)=1\), and the identity (cf. Ref.~\refcite{ECLP}) \(
\sum_{l=1}^{\lambda^\dm}e^{i\nu_l\cdot\gamma}=0,
\)
\(\gamma\in\Gamma\setminus\{0\}\) are used.
Thus, we have \(\min\{a,b\}\ge 1\). By invoking Theorem~\ref{thm:vmsostwf}, we see that each complementary wavelet mask \(q_{C,\mu}\) has at least \(\min\{a,b\}\ge 1\) vanishing moments.
\end{proof}

From Theorem~\ref{thm:main} and Corollary~\ref{coro:VM}, we see that the two types of wavelet masks in TWFPD, the directional ones and the complementary ones, play quite a different role. 

Each directional wavelet mask \(q_{D,l}\) has \(g_l(\lambda\cdot)\) as a factor, entirely determined from the input: the directionality vector \(\xi_l\), the vanishing moment number \(m_l\) along the \(\xi_l\) direction, and the number \(\lambda\) with \(N\le \lambda^\dm\), where \(N\) is the number of directions. The number of vanishing moments of \(q_{D,l}\) is given exactly as \(m_l\). 

On the other hand, the primary role of the complementary wavelet masks in TWFPD is to complement the directional wavelet masks so that when combined, they form a tight wavelet filter bank. 

Additionally, these complementary wavelet masks allow TWFPD to have an alternative synthesis algorithm. More precisely, the standard fast analysis algorithm of tight wavelet filter banks using the wavelet masks \(q_{2,\nu}\), \(\nu\in\Lambda\) in Theorem~\ref{thm:sostwf} generates the detail coefficients in the analysis part of Laplacian pyramid (LP) algorithms \cite{BA}. This connection is easy to observe and can be found, for example, in Ref.~\refcite{HLO}. It is well known that the LP algorithms have a simple reverse process of the analysis algorithm as a synthesis algorithm~\cite{DoVetterli2,ECLP}. Because our complementary wavelet masks \(q_{C,\mu}\) correspond to the wavelet masks \(q_{2,\nu}\), \(\nu\in\Lambda\) in Theorem~\ref{thm:sostwf}, they provide an alternative synthesis algorithm for TWFPD, as shown in detail below.

\subsection{Fast TWFPD algorithms}
\label{subS:Algorithms}
For a dilation \(\lambda \ge 2\), we recall that the {\it downsampling} operator \(\least\) and the {\it upsampling} operator \(\up\) are defined as
\[
x\least(k):=x(\lambda k),\quad k\in\Zd,\quad 
x\up(k):=
\begin{cases} 
x(k/\lambda),&k\in\lambda\Zd\\
                  0,&k\in\Zd\setminus\lambda\Zd.
\end{cases}
\]

Let \(\{\tau,q_{D,1},\ldots,q_{D,N}, q_{C,1},\ldots, q_{C,\lambda^\dm}\}\) be the TWFPD constructed in Theorem~\ref{thm:main}.
We use \(x_j\) to denote the TWFPD {\it coarse coefficients} at level \(j\) and use \(d_{j,D,l}\), \(1\le l\le N\) and \(d_{j,C,\mu}\), \(1\le \mu\le \lambda^\dm\) to denote the TWFPD {\it detail coefficients} at level~\(j\).
The {\it standard TWFPD analysis} algorithm computes the coefficients \(x_j\), \(d_{j,D,l}\), and \(d_{j,C,\mu}\) from the coarse coefficients \(x_{j+1}\) by convolving with TWFPD filters followed by downsampling. The {\it standard TWFPD synthesis} algorithm gets back the coarse coefficients \(x_{j+1}\) by upsampling the coefficients \(x_j\), \(d_{j,D,l}\), and \(d_{j,C,\mu}\), then by convolving with TWFPD filters, and finally by summing them up. These algorithms are the standard fast algorithms for a tight wavelet filter bank but applied to our TWFPD filters. 

\medskip\noindent{\bf Standard Fast TWFPD Algorithms.}
Let \(h\) be the lowpass filter associated with \(\tau\), i.e. \(\tau(\omega)=\sum_{k\in\Zd} h(k)e^{-ik\omega}\).
Let \(h_{l}\), \(1\le l\le N\) be the highpass filters associated with \(g_l\), \(1\le l\le N\) in (\ref{eq:defgl}) (or in (\ref{eq:defglg}) with the nontrivial initial point of direction) via \(g_l(\omega)=\sum_{k\in\Zd}h_{l}(k)e^{-ik\cdot\omega}\). 
As before, let \(\Lambda\) consist of \(\{\nu_1,\dots,\nu_{\lambda^\dm}\}\), the set of representatives of the distinct cosets of \(\Zd/\lambda\Zd\) containing \(0\).
For \(1\le l\le \lambda^\dm\), define \(\delta_{l}:\Zd\to \{0,1\}\) as the filter, which always takes 0 except for \(\delta_{l}(\nu_l)=1\), where \(\nu_l\in\Lambda\).
Define \(\widetilde{h}\), \(\widetilde{h}_{l}\), \(\widetilde{\delta}_{l}\) as \(\widetilde{h}(k):=h(-k)\), \(\widetilde{h}_{l}(k):=h_{l}(-k)\), and \(\widetilde{\delta}_{l}(k):=\delta_{l}(-k)\), \(k\in \Zd\), respectively.

\medskip
{\obeylines{{\tt
{\it input}  \(x_{J+1}: \Zd\to\RR\)
\smallskip
{\bf Standard TWFPD Analysis: computing \(x_{j}\), \(d_{j,D,l}\), and \(d_{j,C,\mu}\) from \(x_{j+1}\)}
for \(j=J,J-1,\ldots,0 \)
\ \ \ \(x_{j}=(\widetilde{h}\ast x_{j+1})\least\) \ \ \ \ \ \ \ \ \ \ \ \ \ \ \ \ \ \ \ \ \ \ \ \ \;(i)
\ \ \ for \(l=1,2,\dots,N\)
\ \ \ \ \ \ \(d_{j,D,l}=\widetilde{h}_{l}\ast x_j\) \ \ \ \ \ \ \ \ \ \ \ \ \ \ \ \ \ \ \ \ \ \ \;(ii)
\ \ \ end
\ \ \ for \(\mu=1,2,\dots,\lambda^\dm\)
\ \ \ \ \ \ \(d_{j,C,\mu}=(x_{j+1}-h\ast (x_j{\up}))(\lambda\cdot-\nu_\mu)\) \ \ \ \ \ \;(iii)
\ \ \ end
end
}}}
\medskip
{\obeylines{{\tt
{\bf Standard TWFPD Synthesis: computing \(x_{j+1}\) from \(x_{j}\), \(d_{j,D,l}\), and \(d_{j,C,\mu}\)}
for \(j=0,1,\ldots,J\)
\ \ \ \(x_{j+1}=h\ast (x_j{\up})+\sum_{l=1}^N ((h_l{\up})\ast h)\ast(d_{j,D,l}{\up})\)
\ \ \ \ \ \ \ \(+\sum_{\mu=1}^{\lambda^\dm} (\widetilde{\delta}_{\mu}-\sum_{m\in\Zd} h(-\lambda m-\nu_\mu) h(\cdot-\lambda m))\ast(d_{j,C,\mu}{\up})\)
end
}}}

\medskip
The standard TWFPD synthesis algorithm given above is immediate once we observe that the highpass filters corresponding to the TWFPD wavelet masks \(q_{D,l}\) and \(q_{C,\mu}\) are \((h_l{\up})\ast h\) and \(\widetilde{\delta}_{\mu}-\sum_{m\in\Zd} h(-\lambda m-\nu_\mu) h(\cdot-\lambda m)\), respectively.
In the standard TWFPD analysis algorithm, the detail coefficients are computed easily because of the specific form that our TWFPD wavelet filters take. The coefficients in step {\tt (ii)} are obtained by observing
\[d_{j,D,l}=((\widetilde{h}_l{\up})\ast \widetilde{h}\ast x_{j+1})\least=\widetilde{h}_l\ast(\widetilde{h}\ast x_{j+1})\least=\widetilde{h}_l\ast x_j,\]
and the coefficients \(d_{j,C,\mu}\) in step {\tt (iii)} are from \((\delta_{\mu}\ast x_{j+1})\least=x_{j+1}(\lambda\cdot-\nu_\mu)\) and
\[\left(\left(\sum_{m\in\Zd} h(\lambda m-\nu_\mu) \widetilde{h}(\cdot-\lambda m)\right)\ast x_{j+1}\right)\least =h(\lambda \cdot-\nu_\mu)\ast x_j = (h\ast (x_j{\up}))(\lambda\cdot-\nu_\mu).\]

\medskip\noindent{\bf LP-based Fast TWFPD Synthesis Algorithm.}
As we discussed at the end of Section~\ref{subS:Theory}, our complementary wavelet masks can be understood in connection with the LP analysis algorithm. From this, we have a simple synthesis process to obtain \(x_{j+1}\) using the coarse coefficients \(x_{j}\) and the detail coefficients  \(d_{j,C,\mu}\) only.

\medskip
{\obeylines{{\tt
{\bf LP-based TWFPD Synthesis: computing \(x_{j+1}\) from \(x_{j}\) and \(d_{j,C,\mu}\)}
for \(j=0,1,\ldots,J\)
\vskip0.1cm
\ \ \ for \(\mu=1,2,\dots,\lambda^\dm\)
\ \ \ \ \ \ \(x_{j+1}(\lambda\cdot-\nu_\mu)=(h\ast (x_j{\up}))(\lambda\cdot-\nu_l)+d_{j,C,\mu}\)\ \ \ \ \ (iv)
\ \ \ end
end
}}}

\medskip
We see that step {\tt (iv)} is simply the reverse process of step {\tt (iii)} and that, unlike the standard TWFPD synthesis algorithm we saw earlier, the detail coefficients \(d_{j,D,\mu}\) are not used for computing \(x_{j+1}\) in this LP-based TWFPD synthesis algorithm. 

Depending on what one would like to have for synthesis algorithms, one can choose which synthesis algorithm to use. If one wants a faster synthesis algorithm, then the LP-based TWFPD synthesis algorithm can be used because it is much faster, as we will quantify soon. Suppose one wants to remove noises in the coefficients after the standard TWFPD analysis algorithm. In that case, the standard TWFPD synthesis algorithm is the one to use since it is much more effective in removing such noises \cite{DoVetterli1}.

\medskip 
Next, we discuss the complexity of TWFPD algorithms.
We measure the complexity by counting the number of multiplicative operations needed in a complete cycle of \(1\)-level-down analysis and \(1\)-level-up synthesis, meaning the number of operations required to obtain \(x_j\), \(d_{j,D,l}\), and \(d_{j,C,\mu}\) from \(x_{j+1}\) and get back \(x_{j+1}\). We consider the two fast algorithms with the same analysis algorithm, the one with the standard synthesis algorithm and another with the LP-based synthesis algorithm.

\medskip\noindent{\bf Complexity of Standard Fast TWFPD Algorithms.}
Suppose that at level \(j+1\), we have the coarse coefficients \(x_{j+1}\) with \(L\) data points. For simplicity, we assume that \(L\) is a multiple of \(\lambda^\dm\), where \(\lambda\) is the dilation factor. Then after \(1\)-level-down analysis, we obtain the coarse coefficients \(x_{j}\) with \(L/\lambda^\dm\) data points in step {\tt (i)}, the detail coefficients \(d_{j,D,l}\) in step {\tt (ii)}, and  the detail coefficients \(d_{j,C,\mu}\) in step {\tt (iii)}. We then get back \(x_{j+1}\) from the coarse coefficients \(x_{j}\) and the detail coefficients \(d_{j,D,l}\) and  \(d_{j,C,\mu}\) using \(1\)-level-up synthesis with standard TWFPD synthesis algorithm. 

Let \(\alpha\) and \(\beta_l\) be the number of nonzero entries in the lowpass filter \(h\) and the highpass filter \(h_l\). Given \(x_{j+1}\) with \(L\) data points, the number of multiplicative operations needed in a complete cycle of \(1\)-level-down analysis and \(1\)-level-up synthesis with standard TWFPD synthesis algorithm is the sum of the following numbers:
\begin{itemize}
\item \(\alpha L\) \quad  \quad \quad \quad \quad \quad  \quad \quad \quad \quad \;\; [for step {\tt (i)} in Standard TWFPD Analysis]
\item \((\sum_{l=1}^N\beta_l/\lambda^\dm)L\) \quad  \quad \quad \quad \quad  \;\, [for step {\tt (ii)} in Standard TWFPD Analysis]
\item \(\alpha L\) \quad  \quad \quad \quad \quad \quad  \quad \quad \quad \quad \quad [for step {\tt (iii)} in Standard TWFPD Analysis]
\item \((\alpha+\sum_{l=1}^N(\lambda\beta_l+\alpha)+2\alpha)L\) [for Standard TWFPD Synthesis]
\end{itemize}
Thus the complexity in this case is given as
\(
(\sum_{l=1}^N\beta_l/\lambda^\dm+(N+5)\alpha+\lambda\sum_{l=1}^N\beta_l)L,
\)
and by considering the average \(\beta^\ast:=(\sum_{l=1}^N\beta_l)/N\) and using \(N\le \lambda^\dm\), we see that the complexity is bounded by
\[
\left((N+5)\alpha+(\lambda N+1)\beta^\ast\right)L.
\]
Therefore, the standard fast TWFPD algorithms have linear complexity with complexity constant \((N+5)\alpha+(\lambda N+1)\beta^\ast\). The complexity constant in this case increases as the number of directions \(N\) increases. 

\medskip\noindent{\bf Complexity with LP-based Fast TWFPD Synthesis Algorithm.}
Let \(L\), \(\alpha\) and \(\beta_l\) be defined as before.
Since the analysis algorithm in this case is exactly the same as before, the number of operations required in a complete cycle of \(1\)-level-down analysis and \(1\)-level-up synthesis with LP-based TWFPD synthesis algorithm is given as the sum of the following numbers: 
\begin{itemize}
\item \(\alpha L+(\sum_{l=1}^N\beta_l/\lambda^\dm)L+\alpha L\) \; [for Standard TWFPD Analysis]
\item  \(\alpha L\) \quad  \quad \quad \quad \quad \quad  \quad \quad \quad \quad \quad [for step {\tt (iv)} in LP-based TWFPD Synthesis]
\end{itemize}
The complexity in this case is 
\(
(\sum_{l=1}^N\beta_l/\lambda^\dm+3\alpha)L,
\)
and from \(\beta^\ast=(\sum_{l=1}^N\beta_l)/N\) and \(N\le \lambda^\dm\) as before, the complexity is bounded by
\be
\label{eq:complexity}
\left(3\alpha+\beta^\ast\right)L.
\ee
As is expected, the complexity constant is much smaller in this case; hence, these fast TWFPD algorithms with  LP-based TWFPD synthesis algorithm are much faster. The algorithms have linear complexity with complexity constant \(3\alpha+\beta^\ast\), which stays the same even if the number of directions \(N\) gets higher. 

\section{Examples}
\label{S:Examples}

In this section, we present some examples to illustrate our method of constructing TWFPD in Theorem~\ref{thm:main}. Although our construction method works for any dimension \(\dm\ge 2\), we consider the case when \(\dm=2\) for simplicity in all of our examples with only one exception. In Example~\ref{ex:exone}, we begin the construction with \(\dm= 2\) case and then generalize it to \(\dm\ge 2\) cases.

\begin{example}
\label{ex:exone}
Let \(\dm=2\), and consider \(N=3\) directions in \(\ZZ^2\). Assume \(\lambda=2\). Let the vanishing moment vector be \((1,1,1)\), and let the direction matrix be
\[
[\xi_1, \xi_2, \xi_3]
=\left[
\begin{array}{ccc}
1&0&1\\ 
0&1&1
\end{array}
\right].
\]
Then, by (\ref{eq:defgl}), we have for \(1\le l\le 3\),
\[
g_l(\omega)=(1-e^{-i\xi_l\cdot\omega})/4, \quad \omega\in\TT^2.
\]
For \(1\le l\le 3\), let
\[
p_l(\omega)=(1+e^{-i\xi_l\cdot\omega})/2,\quad \omega\in\TT^2, 
\]
and let \(\nu_l=\xi_l\), for \(1\le l\le 3\), and \(\nu_4=0\). Then the lowpass mask of the TWFPD in this case is given as, for \(\omega=(\omega_1,\omega_2)\in\TT^2\)
\[\tau(\omega)=1/2+(e^{i\omega_1}+e^{-i\omega_1})/4+(e^{i\omega_2}+e^{-i\omega_2})/4+(e^{i(\omega_1+\omega_2)}+e^{-i(\omega_1+\omega_2)})/4.\]
We note the refinable function associated with this lowpass mask \(\tau\) is the 2-D piecewise linear box spline. From this, we know that both the accuracy number and the flatness number of \(\tau\) are equal to 2. The lowpass filter \(h\) associated with \(\tau\) is depicted in Fig.~\ref{fig:Ex1_hANDhl}(a)\footnote{When drawing a filter in this paper, we use a box to indicate its value  at the origin.}, and the highpass filters \(h_l\), \(l=1,2,3\) associated with \(g_l\), \(l=1,2,3\) (cf. (\ref{eq:deftauh})) are depicted in Fig.~\ref{fig:Ex1_hANDhl}(b)-(d)\footnote{When drawing a highpass filter in this paper, its constant multiple is shown for clear directionality.}.

\begin{figure}[t]
\centering
\begin{subfigure}[b]{0.8\textwidth}
\centering
\begin{tabular}{ccc}
0&1/4&1/4\\
1/4&\fbox{1/2}&1/4\\
1/4&1/4&0
\end{tabular}
\caption{\(h\) }
\end{subfigure}\unskip \
\begin{subfigure}[b]{0.3\textwidth}
\vspace{1mm}
\centering
\begin{tabular}{cc}
0&0\\
\fbox{1}&-1
\end{tabular}
\caption{\(4\times\) (\(h_1\))}
\end{subfigure}\unskip \
\begin{subfigure}[b]{0.3\textwidth}
\vspace{1mm}
\centering
\begin{tabular}{cc}
-1&0\\
\fbox{1}&0
\end{tabular}
\caption{\(4\times\) (\(h_2\))}
\end{subfigure}\unskip \
\begin{subfigure}[b]{0.3\textwidth}
\vspace{1mm}
\centering
\begin{tabular}{cc}
0&-1\\
\fbox{1}&0
\end{tabular}
\caption{\(4\times\) (\(h_3\))}
\end{subfigure}
\caption{Lowpass filter \(h\) associated with \(\tau\), and highpass filters \(h_l\) associated with \(g_l\), for \(l=1,2,3\)  in Example~\ref{ex:exone} (\(\dm=2\)).}
\label{fig:Ex1_hANDhl}
\end{figure}

The directional wavelet masks in this case are, for \(\omega\in\TT^2\),
\[
q_{D,l}(\omega)=\tau(\omega)(1-e^{-2i\xi_l\cdot\omega})/4,\quad 1\le l\le 3
\]
with the associated highpass filters depicted in Fig.~\ref{fig:Ex1_waveletfilters}(a)-(c), 
and the complementary wavelet masks are 
\[
q_{C,\mu}(\omega)=
\begin{cases} 
e^{i\xi_\mu\cdot\omega}-\tau(\omega)(1+e^{2i\xi_\mu\cdot\omega})/4&1\le \mu\le 3,\\
1- \tau(\omega)/2&\mu=4,
\end{cases}
\]
with the associated highpass filters depicted in Fig.~\ref{fig:Ex1_waveletfilters}(d)-(g). 
By Corollary~\ref{coro:VM}, each \(q_{D,l}\) has exactly \(1\) vanishing moment, and each \(q_{C,\mu}\) has at least two vanishing moments. Direct computation shows that each \(q_{C,\mu}\) has exactly two vanishing moments. 

The TWFPD, in this case, is the same as the tight wavelet filter bank constructed in Example 1~of Ref.~\refcite{HL1} in dimension 2, up to shifts of filters. 

\begin{figure}[t]
\centering
\begin{subfigure}[b]{0.3\textwidth}
\centering
\begin{tabular}{ccccc}
&1&1&-1&-1\\
1&\fbox{2}&0&-2&-1\\
1&1& -1 & -1& 
\end{tabular}
\caption{\(8\times\) (\(q_{D,1}\) filter)}
\end{subfigure}\unskip \
\begin{subfigure}[b]{0.3\textwidth}
\vspace{1mm}
\centering
\begin{tabular}{ccc}
&-1&-1\\
-1&-2&-1\\
-1&0&1\\
1& \fbox{2}&1\\
1&1& 
\end{tabular}
\caption{\(8\times\) (\(q_{D,2}\) filter)}
\end{subfigure}\unskip \
\begin{subfigure}[b]{0.3\textwidth}
\vspace{1mm}
\centering
\begin{tabular}{ccccc}
&&&-1&-1\\
&&-1&-2&-1\\
&1&0&-1&\\
1&\fbox{2}&1&&\\
1&1&&&
\end{tabular}
\caption{\(8\times\) (\(q_{D,3}\) filter)}
\end{subfigure}\unskip \
\begin{subfigure}[b]{0.3\textwidth}
\vspace{1mm}
\centering
\begin{tabular}{ccccc}
&-1&-1&-1&-1\\
-1&-2&14&\fbox{-2}&-1\\
-1&-1&-1&-1&
\end{tabular}
\caption{\(16\times\) (\(q_{C,1}\) filter)}
\end{subfigure}
\begin{subfigure}[b]{0.3\textwidth}
\vspace{1mm}
\centering
\begin{tabular}{ccc}
&-1&-1\\
-1&\fbox{-2}&-1\\
-1&14&-1\\
-1&-2&-1\\
-1&-1&
\end{tabular}
\caption{\(16\times\) (\(q_{C,2}\) filter)}
\end{subfigure}
\begin{subfigure}[b]{0.3\textwidth}
\vspace{1mm}
\centering
\begin{tabular}{ccccc}
&&&-1&-1\\
&&-1&\fbox{-2}&-1\\
&-1&14&-1&\\
-1&-2&-1\\
-1&-1&
\end{tabular}
\caption{\(16\times\) (\(q_{C,3}\) filter)}
\end{subfigure}
\begin{subfigure}[b]{0.3\textwidth}
\vspace{1mm}
\centering
\begin{tabular}{ccc}
&-1&-1\\
-1&\fbox{6}&-1\\
-1&-1&
\end{tabular}
\caption{\(8\times\) (\(q_{C,4}\) filter)}
\end{subfigure}
\caption{Highpass filters associated with directional masks \(q_{D,l}\), \(1\le l\le 3\) ((a)-(c)) and with complementary masks \(q_{C,\mu}\), \(1\le \mu\le 4\) ((d)-(g)) in Example~\ref{ex:exone} (\(\dm=2\)).}
\label{fig:Ex1_waveletfilters}
\end{figure}

In fact, our TWFPD construction and the tight wavelet filter bank construction in Example 1~of Ref.~\refcite{HL1} work for any dimension \(\dm\ge 2\), and they give exactly the same tight wavelet filter banks, up to shifts of filters. To see this, for dimension \(\dm\ge 2\), consider \(N=2^\dm-1\) directions in \(\Zd\). Suppose that \(\lambda=2\), and the vanishing moment vector is the vector of \(1\)'s. Let the directionality vectors \(\xi_1,\dots,\xi_{2^\dm-1}\) be the elements of \(\{0,1\}^\dm\setminus\{0\}\), ordered in some fixed way.
Then, for \(1\le l\le 2^\dm-1\),
\[
g_l(\omega)=(1-e^{-i\xi_l\cdot\omega})/2^{\dm/2+1},\quad\omega\in\Td.
\]
Define, similar to the case of dimension \(2\), for \(1\le l\le 2^\dm-1\)
\[
p_l(\omega)=(1+e^{-i\xi_l\cdot\omega})/2,\quad\omega\in\Td.
\]
Let \(\nu_l=\xi_l\), for \(1\le l\le 2^\dm-1\), and \(\nu_{2^\dm}=0\). Then the lowpass mask \(\tau\) is
\[
\tau(\omega)=1/{2^{\dm/2}}+\sum_{l=1}^{2^\dm-1}(e^{i\xi_l\cdot\omega}+e^{-i\xi_l\cdot\omega})/{2^{\dm/2+1}}, \quad\omega\in\Td,
\]
which has the \(\dm\)-D piecewise linear box spline as the associated refinable function. The directional wavelet masks of TWFPD are, for \(\omega\in\Td\)
\[
q_{D,l}(\omega)=\tau(\omega)(1-e^{-2i\xi_l\cdot\omega})/2^{\dm/2+1},\quad 1\le l\le 2^\dm-1
\]
and the complementary wavelet masks of TWFPD are 
\[
q_{C,\mu}(\omega)=
\begin{cases} 
e^{i\xi_\mu\cdot\omega}-\tau(\omega)(1+e^{2i\xi_\mu\cdot\omega})/4&1\le \mu\le 2^\dm-1,\\
1-\tau(\omega)/2&\mu=2^\dm,
\end{cases}
\]
Similar to the 2-D case, we see that each \(q_{D,l}\) has exactly one vanishing moment, and each \(q_{C,\mu}\) has exactly two vanishing moments.

For complexity computation, note that the number of nonzero entries in the lowpass filter \(h\) of this TWFPD is \(\alpha=2^{\dm+1}-1\) and the average number of nonzero entries in the highpass filters \(h_l\) (associated with \(g_l\)), \(1\le l\le 2^\dm-1\), of this TWFPD is \(\beta^\ast=2\) (cf. Section~\ref{subS:Algorithms}). Hence the fast TWFPD algorithms with LP-based TWFPD synthesis algorithm have linear complexity with complexity constant \(3\alpha+\beta^\ast=6\cdot 2^{\dm}-1\) (cf. (\ref{eq:complexity})). In particular, when \(\dm=2\), the complexity constant is \(23\). For a comparison, the fast algorithms of \(\dm\)-D tensor-product based Haar orthonormal wavelets have linear complexity with complexity constant \(4\dm\cdot 2^{\dm}\), which is 32 when \(\dm=2\).\; \qed
\end{example}

\begin{figure}[t]
\centering
\begin{subfigure}[b]{0.8\textwidth}
\centering
\begin{tabular}{cccc}
0&1/4&0&0\\
0&1/4&1/4&0\\
1/4&\fbox{0}&1/4&1/4\\
1/4&1/4&0&0
\end{tabular}
\caption{\(h\) }
\end{subfigure}\unskip \
\begin{subfigure}[b]{0.2\textwidth}
\vspace{1mm}
\centering
\begin{tabular}{cc}
0&0\\
\fbox{1}&-1
\end{tabular}
\caption{\(4\times\) (\(h_1\))}
\end{subfigure}\unskip \
\begin{subfigure}[b]{0.2\textwidth}
\vspace{1mm}
\centering
\begin{tabular}{cc}
-1&0\\
\fbox{1}&0
\end{tabular}
\caption{\(4\times\) (\(h_2\))}
\end{subfigure}\unskip \
\begin{subfigure}[b]{0.2\textwidth}
\vspace{1mm}
\centering
\begin{tabular}{cc}
0&-1\\
\fbox{1}&0
\end{tabular}
\caption{\(4\times\) (\(h_3\))}
\end{subfigure}
\begin{subfigure}[b]{0.2\textwidth}
\vspace{1mm}
\centering
\begin{tabular}{cc}
-1&0\\
\fbox{0}&1
\end{tabular}
\caption{\(4\times\) (\(h_4\))}
\end{subfigure}
\caption{Lowpass filter \(h\) and highpass filters \(h_l\), \(l=1,2,3,4\), in Example~\ref{ex:extwo}.}
\label{fig:Ex2_hANDhl}
\end{figure}

\begin{example}
\label{ex:extwo}
Consider adding one more direction to the directions considered in Example~\ref{ex:exone} for the 2-D case. 
Suppose that \(N=4\), \(\lambda=2\), and the vanishing moment vector is the vector of \(1\)'s. Let the direction matrix be 
\[
[\xi_1, \xi_2, \xi_3, \xi_4]=\left[
\begin{array}{rrrr}
1&0&1&-1\\ 
0&1&1&1
\end{array}
\right].
\]
Let the initial point of direction \(\xi_4\) be \(\zeta_4=(1,0)\) and the other initial points be zero. Then, for \(\omega=(\omega_1,\omega_2)\in\TT^2\), by (\ref{eq:defgl}) and (\ref{eq:defglg}),
\[
g_l(\omega)=(1-e^{-i\xi_l\cdot\omega})/4,\quad 1\le l\le 3,\quad g_4(\omega)=e^{-i\omega_1}(1-e^{i\xi_4\cdot\omega})/4.
\]
Let
\[
p_l(\omega)=(1+e^{-i\xi_l\cdot\omega})/2, \quad 1\le l\le 4,
\]
let \(\nu_l=\xi_l\) for \(1\le l\le 3\), and let
\[
\nu_4=
\left[\begin{array}{c}
-2\\ 
0
\end{array}\right].
\]
Then the lowpass mask of this TWFPD is \(\)
\beaN
\tau(\omega)&{\;=\;}&(e^{i\omega_1}+e^{-i\omega_1})/4+(e^{i\omega_2}+e^{-i\omega_2})/4\\
&+&(e^{i(\omega_1+\omega_2)}+e^{-i(\omega_1+\omega_2)})/4+(e^{-2i\omega_1}+e^{-2i\omega_2})/4,\quad \omega=(\omega_1,\omega_2)\in\TT^2.
\eeaN
Fig.~\ref{fig:Ex2_hANDhl} shows the associated lowpass filter \(h\), and the highpass filters \(h_l\), \(l=1,2,3,4\). It is easy to check that the accuracy number and the flatness number of \(\tau\) are exactly one. This, together with the fact that \(m_l=1\), \(1\le l\le 4\), gives that every wavelet mask of this TWFPD has exactly one vanishing moment (cf. Corollary~\ref{coro:VM}).

In this case, \(\alpha=8\) and \(\beta^\ast=2\), hence the fast TWFPD algorithms with LP-based TWFPD synthesis algorithm have linear complexity with complexity constant \(26\).

The directions considered in this TWFPD are the same as the directions studied in Ref.~\refcite{LCSHT}, where the directional tight wavelet filter bank with 2-D Haar lowpass mask is constructed, but the construction there does not allow one to choose the directions a priori.\ \qed
\end{example}

\begin{figure}[t]
\centering
\begin{subfigure}[b]{0.8\textwidth}
\centering
\begin{tabular}{ccccccccc}
1/6&0&0&0&0&1/6&0&0\\
0&1/6&0&0&0&0&0&0\\
0&0&0&1/6&0&1/6&0&1/6\\
0&0&0&0&1/6&0&0&0\\
0&0&1/6&\fbox{1/3}&0&1/6&0&1/6\\
0&1/6&1/6&1/6&0&0&0&0\\
0&0&1/6&1/6&0&0&0&1/6
\end{tabular}
\caption{\(h\) }
\end{subfigure}\unskip \
\begin{subfigure}[b]{0.2\textwidth}
\vspace{1mm}
\centering
\begin{tabular}{cc}
0&0\\
\fbox{1}&-1
\end{tabular}
\caption{\(6\times\) (\(h_1\))}
\end{subfigure}\unskip \
\begin{subfigure}[b]{0.2\textwidth}
\vspace{1mm}
\centering
\begin{tabular}{cc}
-1&0\\
\fbox{1}&0
\end{tabular}
\caption{\(6\times\) (\(h_2\))}
\end{subfigure}\unskip \
\begin{subfigure}[b]{0.2\textwidth}
\vspace{1mm}
\centering
\begin{tabular}{cc}
0&-1\\
\fbox{1}&0
\end{tabular}
\caption{\(6\times\) (\(h_3\))}
\end{subfigure}
\begin{subfigure}[b]{0.2\textwidth}
\vspace{1mm}
\centering
\begin{tabular}{cc}
-1&0\\
\fbox{0}&1
\end{tabular}
\caption{\(6\times\) (\(h_4\))}
\end{subfigure}
\begin{subfigure}[b]{0.2\textwidth}
\vspace{1mm}
\centering
\begin{tabular}{ccc}
0&0&-1\\
\fbox{1}&0&0
\end{tabular}
\caption{\(6\times\) (\(h_5\))}
\end{subfigure}
\begin{subfigure}[b]{0.2\textwidth}
\vspace{1mm}
\centering
\begin{tabular}{cc}
0&-1\\
0&0\\
\fbox{1}&0
\end{tabular}
\caption{\(6\times\) (\(h_6\))}
\end{subfigure}
\begin{subfigure}[b]{0.2\textwidth}
\vspace{1mm}
\centering
\begin{tabular}{cc}
-1&0\\
0&0\\
\fbox{0}&1
\end{tabular}
\caption{\(6\times\) (\(h_7\))}
\end{subfigure}
\begin{subfigure}[b]{0.2\textwidth}
\vspace{1mm}
\centering
\begin{tabular}{ccc}
-1&0&0\\
\fbox{0}&0&1
\end{tabular}
\caption{\(6\times\) (\(h_8\))}
\end{subfigure}
\caption{Lowpass filter \(h\) and highpass filters \(h_l\), \(1\le l\le 8\), in Example~\ref{ex:exthree}.}
\label{fig:Ex3_hANDhl}
\end{figure}

\begin{example}
\label{ex:exthree}
Consider adding four more directions to the directions considered in Example~\ref{ex:extwo}, and let  \(N=8\). In this case \(\lambda=2\) cannot be used as it does not satisfy the constraint \(N\le \lambda^2\). We choose \(\lambda=3\) and the vanishing moment vector to be the vector of \(1\)'s. Let the direction matrix be 
\[
[\xi_1, \xi_2, \xi_3, \xi_4,\xi_5,\xi_6,\xi_7,\xi_8]=\left[
\begin{array}{rrrrrrrrr}
1&0&1&-1&2&1&-1&-2\\ 
0&1&1&1&1&2&2&1
\end{array}
\right].
\]
Let the initial point of directions \(\xi_4\), \(\xi_7\), and \(\xi_8\) be \(\zeta_4=\zeta_7=(1,0)\), and \(\zeta_8=(2,0)\), respectively, and let the other initial points be zero. Then, for \(\omega=(\omega_1,\omega_2)\in\TT^2\), by (\ref{eq:defgl}) and (\ref{eq:defglg}), we have \(g_8(\omega)=e^{-2i\omega_1}(1-e^{i\xi_8\cdot\omega})/6\), and
\[
g_l(\omega)=(1-e^{-i\xi_l\cdot\omega})/6,\quad l=1,2,3,5,6,
\]
\[
g_l(\omega)=e^{-i\omega_1}(1-e^{i\xi_l\cdot\omega})/6,\quad l=4,7.
\]
By proceeding similarly as in Example~\ref{ex:exone} and \ref{ex:extwo}, let 
\[
p_l(\omega)=(1+e^{-i\xi_l\cdot\omega})/2,\quad 1\le l\le 8, \quad \omega\in\TT^2.
\]
Let \(\nu_l=\xi_l\) for \(l=1,2,3,5,6\), let \(\nu_9=0\), and let
\[\nu_4=
\left[\begin{array}{c}
-4\\ 
2
\end{array}\right],\quad
\nu_7=
\left[\begin{array}{c}
0\\ 
2
\end{array}\right],\quad
\nu_8=
\left[\begin{array}{c}
-4\\ 
0
\end{array}\right].
\] 
Then the lowpass mask is given as, for \(\omega=(\omega_1,\omega_2)\in\TT^2\)
\beaN
\tau(\omega)&{\;=\;}&1/3+\sum_{l\in\{1,2,3,5,6\}}(e^{i\xi_l\cdot\omega}+e^{-2i\xi_l\cdot\omega})/6+(e^{-4i\omega_1}e^{2i\omega_2}+e^{-i\omega_1}e^{-i\omega_2})/6\\
&+&(e^{2i\omega_2}+e^{3i\omega_1}e^{-4i\omega_2})/6
+(e^{-4i\omega_1}+e^{2i\omega_1}e^{-3i\omega_2})/6.
\eeaN
The lowpass filter \(h\) associated with \(\tau\) and the highpass filters \(h_l\) associated with \(g_l\), for \(1\le l\le 8\) are depicted in Fig.~\ref{fig:Ex3_hANDhl}. As in Example~\ref{ex:extwo}, the accuracy number and the flatness number of \(\tau\) of this TWFPD are exactly one, and every wavelet mask of this TWFPD has exactly one vanishing moment. 

Since \(\alpha=17\) and \(\beta^\ast=2\) in this example, the complexity constant for linear fast TWFPD algorithms with LP-based TWFPD synthesis algorithm is \(53\).\ \qed
\end{example}

\begin{figure}[t]
\centering
\begin{subfigure}[b]{0.8\textwidth}
\centering
\begin{tabular}{ccccc}
0&\(\frac{1}{8}(1-\sqrt{2})\)&0&0&\(\frac{1}{8}(1-\sqrt{2})\)\\
0&0&0&0&0\\
0&1/4&1/4&0&0\\
\(\frac{1}{8}(1+\sqrt{2})\)&\fbox{1/2}&1/4&0&\(\frac{1}{8}(1-\sqrt{2})\)\\
\(\frac{1}{8}(1+\sqrt{2})\)&\(\frac{1}{8}(1+\sqrt{2})\)&0&0&0
\end{tabular}
\caption{\(h\) }
\end{subfigure}\unskip \
\begin{subfigure}[b]{0.3\textwidth}
\vspace{1mm}
\centering
\begin{tabular}{ccc}
0&0&0\\
\fbox{1}&-2&1
\end{tabular}
\caption{\(8\times\) (\(h_1\))}
\end{subfigure}\unskip \
\begin{subfigure}[b]{0.3\textwidth}
\vspace{1mm}
\centering
\begin{tabular}{cc}
1&0\\
-2&0\\
\fbox{1}&0
\end{tabular}
\caption{\(8\times\) (\(h_2\))}
\end{subfigure}\unskip \
\begin{subfigure}[b]{0.3\textwidth}
\vspace{1mm}
\centering
\begin{tabular}{ccc}
0&0&1\\
0&-2&0\\
\fbox{1}&0&0
\end{tabular}
\caption{\(8\times\) (\(h_3\))}
\end{subfigure}
\caption{Lowpass filter \(h\) and highpass filters \(h_l\), \(l=1,2,3\), in Example~\ref{ex:exfour}.}
\label{fig:Ex4_hANDhl}
\end{figure}

\begin{example}
\label{ex:exfour}
Keeping precisely the same directions as in Example~\ref{ex:exone} for the 2-D case, consider increasing the number of vanishing moments along each direction from 1 to 2. For this, let \(N=3\) and \(\lambda=2\), let the direction matrix \([\xi_1, \xi_2,\xi_3]\) be the same as in Example~\ref{ex:exone}, and let the vanishing moment vector be \((2,2,2)\).
Then, for \(\omega=(\omega_1,\omega_2)\in\TT^2\)
\[
g_l(\omega)=(1-2e^{-i\xi_l\cdot\omega}+e^{-2i\xi_l\cdot\omega})/8,\quad 1\le l\le 3.
\]
Let \(\nu_l=\xi_l\), for \(1\le l\le 3\), \(\nu_4=0\), and
\[
p_l(\omega)=\left(1+\sqrt{2}+2e^{-i\xi_l\cdot\omega}+(1-\sqrt{2})e^{-2i\xi_l\cdot\omega}\right)/4,\quad 1\le l\le 3.
\]
Then, for \(\omega=(\omega_1,\omega_2)\in\TT^2\), 
\beaN
\tau(\omega)&{\;=\;}&\left((1+\sqrt{2})e^{i\omega_1}+2e^{-i\omega_1}+(1-\sqrt{2})e^{-3i\omega_1}\right)/8+1/2\\
&+&\left((1+\sqrt{2})e^{i\omega_2}+2e^{-i\omega_2}+(1-\sqrt{2})e^{-3i\omega_2}\right)/8\\
&+&\left((1+\sqrt{2})e^{i(\omega_1+\omega_2)}+2e^{-i(\omega_1+\omega_2)}+(1-\sqrt{2})e^{-3i(\omega_1+\omega_2)}\right)/8.
\eeaN
The lowpass filter \(h\) and the highpass filters \(h_l\), \(l=1,2,3\) are drawn in Fig.~\ref{fig:Ex4_hANDhl}. For this TWFPD, each directional wavelet mask has exactly two vanishing moments and each complementary wavelet mask has exactly one vanishing moment. In the fast TWFPD algorithms with LP-based synthesis using this TWFPD, \(\alpha=10\), \(\beta^\ast=3\), and the complexity constant is \(33\).\ \qed
\end{example}

\section*{Acknowledgments}
This work was supported in part by the National Research Foundation of Korea (NRF) grant [No. 2015R1A5A1009350 and No. 2021R1A2C1007598] and in part by the Institute of Information \& Communications Technology Planning \& Evaluation (IITP) grant [No. 2021-0-00023], both funded by the Korea government (MSIT).

\bibliographystyle{acm} 
\bibliography{IEEEabrv,BibliographyAll}

\end{document}